\documentclass[10pt,reqno]{amsart}

\usepackage{amsmath,amssymb,mathtools}
\usepackage{enumitem,placeins}
\usepackage[expansion=false]{microtype}
\usepackage{xcolor}
\usepackage{pgfplots}
\usepackage[colorlinks=true,linkcolor=blue!55!black,citecolor=blue!55!black,urlcolor=blue!55!black]{hyperref}
\pgfplotsset{compat=1.18}

\pgfmathdeclarefunction{twou}{2}{%
  \pgfmathparse{
    sqrt(max(0,(
      sqrt((4*(#1)^2+4-(#2)^2)^2+(4*(#1)*(#2))^2)
      +4*(#1)^2+4-(#2)^2
    )/2))
  }%
}
\pgfmathdeclarefunction{twov}{2}{%
  \pgfmathparse{
    sign(#1)*sqrt(max(0,(
      sqrt((4*(#1)^2+4-(#2)^2)^2+(4*(#1)*(#2))^2)
      -(4*(#1)^2+4-(#2)^2)
    )/2))
  }%
}

\newcommand{\R}{\mathbb{R}}
\newcommand{\C}{\mathbb{C}}
\newcommand{\Cminus}{\mathbb{C}_{-}}
\newcommand{\eps}{\varepsilon}
\newcommand{\ii}{\mathrm{i}}
\newcommand{\e}{\mathrm{e}}
\newcommand{\dd}{\,\mathrm{d}}
\newcommand{\one}{\mathbf{1}}
\newcommand{\Tr}{\operatorname{Tr}}
\newcommand{\Spec}{\operatorname{Spec}}
\newcommand{\dist}{\operatorname{dist}}
\newcommand{\diag}{\operatorname{diag}}
\newcommand{\Lip}{\operatorname{Lip}}
\newcommand{\BL}{\mathrm{BL}}
\newcommand{\norm}[1]{\lVert #1\rVert}
\newcommand{\ip}[2]{\langle #1,#2\rangle}
\newcommand{\set}[1]{\left\{#1\right\}}
\renewcommand{\Re}{\operatorname{Re}}
\renewcommand{\Im}{\operatorname{Im}}

\newtheorem{theorem}{Theorem}[section]
\newtheorem{proposition}[theorem]{Proposition}
\newtheorem{lemma}[theorem]{Lemma}

\theoremstyle{remark}
\newtheorem{remark}[theorem]{Remark}

\title[Uniform approximation by continuum multisolitons]
{Uniform-in-Time Approximation of Calogero--Moser Particles
by Continuum Multisolitons}
\def\AuthorEmail{vw156@georgetown.edu}
\def\FundingStatement{No funding was received for conducting this study.}
\def\CompetingInterestsStatement{The author declares no competing interests.}
\author{Hedong Wang}
\thanks{Hedong Wang (corresponding author), Department of Mathematics and
Statistics, Georgetown University, Washington, DC, USA\ifx\AuthorEmail\empty
\else.\space Email: \texttt{\AuthorEmail}\fi}
\thanks{Acknowledgments: The author thanks Benjamin Harrop-Griffiths for
introducing the problem and for comments on earlier versions of the manuscript.}
\date{August 2026}
\subjclass[2020]{35Q55, 37K10, 37K15, 15A18}
\keywords{continuum Calogero--Moser equation; multisolitons; rational Calogero--Moser system; uniform approximation; rank-one perturbation}
\hypersetup{
  pdftitle={Uniform-in-Time Approximation of Calogero-Moser Particles by Continuum Multisolitons},
  pdfauthor={Hedong Wang},
  pdfkeywords={continuum Calogero-Moser equation; multisolitons; rational Calogero-Moser system; uniform approximation; rank-one perturbation}
}

\begin{document}

\begin{abstract}
We prove that every solution of the classical rational Calogero--Moser
system can be approximated, after subtracting a common linear drift from the
particle positions, by rational multisoliton solutions of the focusing
continuum Calogero--Moser equation, uniformly for all time. The construction
identifies the multisoliton inverse spectral matrix as a rank-one dissipative
perturbation of Moser's time-dependent Hermitian position matrix. A uniform
lower bound on the particle separation then yields quantitative control of
the multisoliton poles. Their real parts and real velocities approximate the
shifted particle positions and velocities with errors of order $\eps^2$,
while the pole heights are positive and sum exactly to $\eps$. Using the exact
Poisson-kernel representation of the multisoliton density, we obtain
convergence to the atomic particle measure in bounded-Lipschitz distance at
rate $O(\eps(1+|\log\eps|))$, uniformly in time. We also derive the large-time
distribution of pole height at fixed $\eps$, with one distinguished branch
retaining the limiting height and the remaining heights decaying quadratically
in time. Explicit two-particle formulas illustrate the approximation and show
that the real-position error estimate is sharp.
\end{abstract}

\maketitle
\enlargethispage{4pt}

\section{Introduction}

The focusing continuum Calogero--Moser equation is
\begin{equation}\label{eq:CCM}
  \ii\partial_tq+\partial_x^2q
  -2\ii q\,\partial_x\Pi_+\bigl(|q|^2\bigr)=0,
\end{equation}
where $\Pi_+$ denotes the Cauchy--Szeg\H{o} projection onto non-negative
wavenumbers.  Following Killip, Laurens, and Vi\c{s}an
\cite{KillipLaurensVisan}, we refer to~\eqref{eq:CCM} as the continuum
Calogero--Moser equation (CCM).  The same equation is called the
Calogero--Moser derivative nonlinear Schr\"odinger equation by G\'erard and
Lenzmann \cite{GerardLenzmann}.

The focusing model~\eqref{eq:CCM} was introduced by Abanov, Bettelheim, and
Wiegmann \cite{AbanovBettelheimWiegmann} as a hydrodynamic limit of the
Calogero--Sutherland particle system
\cite{CalogeroGround,Calogero,Sutherland71,Sutherland72,Sutherland75}.  A
defocusing version of~\eqref{eq:CCM} had previously appeared in the context of
internal waves in two-fluid systems \cite{Pelinovsky}.

Our interest in~\eqref{eq:CCM} arises from the work of G\'erard and Lenzmann
\cite{GerardLenzmann}.  Among several remarkable results, they derived a Lax
pair for solutions in the Hardy space
\[
  L_+^2(\R):=\Pi_+L^2(\R)
\]
and found explicit rational multisoliton solutions with poles $z_j(t)$ in the
lower half-plane $\Cminus$.  Whenever these poles are simple, equivalently
away from pole collisions, they evolve according to
\begin{equation}\label{eq:complexCM}
  \ddot z_j(t)
  =\sum_{k\ne j}\frac{8}{(z_j(t)-z_k(t))^3}.
\end{equation}
An equivalent determinantal formula for these multisolitons was obtained by
Matsuno \cite{Matsuno}.  Equation~\eqref{eq:complexCM} is a complex version of
the classical Calogero--Moser particle system
\begin{equation}\label{eq:realCM}
  \ddot x_j(t)
  =\sum_{k\ne j}\frac{8}{(x_j(t)-x_k(t))^3},
  \qquad x_j(t)\in\R.
\end{equation}
This system was proved to be completely integrable by Moser \cite{Moser}.  The
normalization in~\eqref{eq:realCM} is fixed throughout the paper.

The similarity between~\eqref{eq:complexCM} and~\eqref{eq:realCM} leads to the
central question of this article: can we rigorously approximate an arbitrary
solution of~\eqref{eq:realCM} by a rational multisoliton solution
of~\eqref{eq:CCM}?  We prove that the answer is yes, up to a Galilean
transformation of the particle system.

More precisely, for all sufficiently small $\eps>0$ we construct a rational
multisoliton $q^\eps(t,x)$ whose mass density $|q^\eps(t,x)|^2$ converges in the
sense of distributions to a sum of delta functions supported at the classical
particle locations.  On bounded time intervals, standard ODE techniques show
that solutions of~\eqref{eq:complexCM} with small imaginary parts are
well-approximated by solutions of~\eqref{eq:realCM}.  Here we establish the
stronger conclusion that the approximation is uniform in time.

To state the result, introduce the Hardy--Sobolev space
\[
  H_+^1(\R):=H^1(\R)\cap L_+^2(\R)
\]
and, for finite Borel measures on $\R$, the bounded-Lipschitz distance
\begin{equation}\label{eq:dBL}
  d_{\BL}(\mu,\nu)
  :=\sup\set{
    \int_{\R}\varphi\,\dd(\mu-\nu):
    \norm{\varphi}_{L^\infty}\le1,\ 
    \Lip(\varphi)\le1
  }.
\end{equation}
This distance metrizes weak convergence of finite Borel measures on $\R$.

\begin{theorem}[Uniform approximation]\label{thm:intro}
Let $N\ge1$ and let
$x=(x_1,\ldots,x_N)\in C^\infty(\R;\R^N)$ be a solution of the classical
Calogero--Moser system~\eqref{eq:realCM}.  Then there exist $c\in\R$ and
$\eps_0>0$, together with rational multisoliton solutions
\[
  q^\eps\in C(\R;H_+^1(\R)),
  \qquad 0<\eps<\eps_0,
\]
of~\eqref{eq:CCM}, such that
\begin{equation}\label{eq:introconvergence}
  \lim_{\eps\downarrow0}\ 
  \sup_{t\in\R}
  d_{\BL}\left(
    |q^\eps(t,x)|^2\dd x,\,
    2\pi\sum_{j=1}^N\delta_{x_j(t)-ct}
  \right)
  =0.
\end{equation}
In particular,
\[
  |q^\eps(t,x)|^2
  \longrightarrow
  2\pi\sum_{j=1}^N\delta_{x_j(t)-ct}
\]
in the sense of distributions, uniformly for $t\in\R$.
\end{theorem}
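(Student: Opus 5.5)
We build the multisolitons from Moser's Lax formalism for \eqref{eq:realCM} and perturb by a rank-one dissipative term. First we write Moser's matrices for \eqref{eq:realCM}. We set $X(t)=\diag(x_j(t))$ and $L(t)$ with diagonal $\dot x_j$ and off-diagonal entries $2\ii/(x_j-x_k)$ (normalization adapted to the constant $8$). Then $X(t)=X_0+tL_0$ up to conjugation by a unitary $U(t)$, so the positions are the eigenvalues of the Hermitian matrix $X_0+tL_0$.

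On the multisoliton side, we use the G\'erard--Lenzmann/Matsuno description: the poles $z_j(t)$ of a rational multisoliton are the eigenvalues of a matrix of the form $M_0+tL_0'$ perturbed by a rank-one term $-\ii\,\eps\, e e^*$-type dissipation, where $e=(1,\dots,1)/\sqrt N$ in a suitable normalization. The trace of this term gives $\sum_j \Im z_j=-\eps$ exactly. The common drift $c$ comes from the Galilean boost needed to match the CCM Lax structure; we would take $c$ proportional to the total momentum, e.g.\ $c=\frac1N\sum\dot x_j$, or whatever the explicit formula forces. The candidate is thus $A_\eps(t)=X_0+tL_0-ct-\ii\eps\, P$ with $P$ rank one and positive semidefinite. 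We must check that $A_\eps(t)$ is indeed the inverse spectral matrix of an $H^1_+$ multisoliton for all $t$. This requires the spectrum in $\Cminus$ (dissipativity gives $\Im\lambda\le0$, strictly by a cyclicity argument) and the commutator constraint $[A,L]$ of rank-one form, which should be inherited from Moser's $[X,L]=2\ii(ee^*-I)$-type identity.

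The central step is uniform pole control. The classical system has a uniform separation lower bound $\delta:=\inf_t\min_{j\ne k}|x_j(t)-x_k(t)|>0$. This holds by energy conservation, since the repulsive potential is bounded by the conserved energy, and it is uniform in time because scattering gives linear separation at large $|t|$. The eigenvalues of the Hermitian matrix $H(t)=X_0+tL_0-ct$ are then $\delta$-separated, and we perturb by $-\ii\eps P$ with $\|P\|\le1$. For $\eps<\delta/4$, Gershgorin/Bauer--Fike arguments in the eigenbasis of $H(t)$ give one simple pole $z_j$ near each $x_j(t)-ct$. First-order perturbation yields $z_j=x_j-ct-\ii\eps|\langle u_j,e\rangle|^2+O(\eps^2/\delta)$, with constants independent of $t$ because they depend only on $\delta$ and $\|P\|$. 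I expect this step, getting the $t$-independence of constants and the exact algebraic identification, to be the main obstacle. If the algebra is delicate, I would first try writing the CCM multisoliton formula $q=\langle (A-x)^{-1}e,\cdot\rangle$ and matching it directly.

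The convergence \eqref{eq:introconvergence} then follows from the density representation. For a rational multisoliton, $|q^\eps|^2$ equals a sum of Poisson kernels: $|q^\eps(t,x)|^2=\sum_j 2\Im\frac{1}{x-z_j}$-type, i.e.\ $\sum_j 2\pi P_{h_j}(x-\Re z_j)$ with heights $h_j=-\Im z_j>0$ and $\sum h_j=\eps$. This holds by the trace formula $|q|^2=-2\Im\Tr (A-x)^{-1}$-like identities. Testing against $\varphi$ with $\|\varphi\|_\infty,\Lip\varphi\le1$ gives two contributions. A shift of the atoms contributes $2\pi\sum_j|\Re z_j-(x_j-ct)|=O(\eps^2)$. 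The Poisson smoothing contributes $\int|P_h*\varphi-\varphi|\le C h(1+|\log h|)$, using the truncated first moment of the Cauchy kernel with $\|\varphi\|_\infty\le1$ controlling the tails. Summing gives $O(\eps(1+|\log\eps|))$ uniformly in $t$, which tends to zero.
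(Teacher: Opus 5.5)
There is a genuine gap, and it sits at the step you flag as the main obstacle. You never exhibit a multisoliton whose pole matrix is a rank-one dissipative perturbation of $H(t)-ct$, and the tentative choices you do make would not produce one.

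In the G\'erard--Lenzmann matrix \eqref{eq:GLmatrix}, the time derivative $2\diag(\lambda)$ annihilates the dissipative direction ($\lambda_m=0$) and commutes with the dissipation $-\ii\rho\,e_me_m^*$. This is why $[\mathcal M(t),\diag(\lambda)]=-\ii(\one\one^*-I)$ holds independently of $\rho$, and why one pole branch always has zero asymptotic velocity. For $A_\eps(t)=X_0+tL_0-ct-\ii\eps P$ to be of this form, you therefore need $[P,L_0]=0$ and $(L_0-c)P=0$. So $P$ must project onto an eigenvector of $L_0$, and $c$ must be the corresponding eigenvalue $c_m$, i.e.\ an asymptotic velocity.

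Both of your guesses fail these conditions.
\begin{itemize}
\item The commutator vector $\one$ is never an eigenvector of $L_0$ when $N\ge2$. In the frame of \eqref{eq:H}, $L_0$ becomes $\diag(c_1,\ldots,c_N)$ and the commutator vector is still $\one$, so this is just distinctness of the $c_j$. With $P=\one\one^*/N$, the commutator constraint you yourself list as necessary is therefore violated.
\item Your drift $c=\frac1N\sum_j\dot x_j=\frac1N\sum_jc_j$ equals no $c_m$ in general, and never when $N=2$.
\end{itemize}
For the same reason, your appeal to a generic trace identity for $|q^\eps|^2$ is not justified. The representation \eqref{eq:poisson} uses the precise structure of \eqref{eq:GLinverse} (amplitude $\sqrt{2\rho}$, input vector $\one$, dissipation and output on $e_m$), not dissipativity alone.

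The missing idea is the parameter choice \eqref{eq:parameters}. Diagonalize $L_0$ to reach $H(t)$ in \eqref{eq:H} and set $\lambda_j=(c_j-c_m)/2$, $\rho=\eps$; then \eqref{eq:GLmatrix} becomes exactly $\mathcal M_\eps(t)=H(t)-c_mtI-\ii\eps P_m$.

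With $P=P_m$ you then need an ingredient your sketch does not supply. The first-order heights are $\eps\Tr(P_mE_j(t))=\eps|u_m|^2$, with $u$ a unit eigenvector of $H(t)$. Their strict positivity for all $j$ and all $t$ is not automatic. Contrast $|\ip{u}{\one}|=1$, which does follow at once from the commutator identity. The paper proves positivity in Lemma~\ref{lem:nonzero} via $R(z)R^\sharp(z)=\sum_j|u_j|^2/(z-c_j)^2$. This is what places all $N$ poles, simple, strictly in $\Cminus$, so that \eqref{eq:poisson} applies.

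Once the correct matrix is in place, the rest of your outline agrees with the paper. Your localization step uses Gershgorin with disjoint disks where the paper uses a Neumann series and Rouch\'e, and your Poisson-kernel bounded-Lipschitz estimate matches. Also, the energy bound \eqref{eq:energygap} alone gives the uniform gap; no scattering argument is needed.
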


\begin{remark}[Quantitative form]\label{rem:quantitative-intro}
The proof gives the more precise estimate
\[
  \sup_{t\in\R}
  d_{\BL}\left(
    |q^\eps(t,x)|^2\dd x,\,
    2\pi\sum_{j=1}^N\delta_{x_j(t)-ct}
  \right)
  \le C\eps(1+|\log\eps|),
\]
where $C$ is independent of $t$ and $\eps$, but may depend on $N$, $x(0)$,
and $\dot x(0)$.
\end{remark}

The translation by $ct$ corresponds to a Galilean transformation of the
particle system~\eqref{eq:realCM}.  Na\"ively, such a translation is unnecessary
because~\eqref{eq:CCM} also possesses the Galilean symmetry
\[
  q(t,x)\longmapsto
  \e^{\ii(cx/2-c^2t/4)}q(t,x-ct).
\]
However, this transformation translates the Fourier support of $q$ by $c/2$.
Consequently, it preserves $L_+^2(\R)$ when $c\ge0$, but not when $c<0$.
Although it is possible to study~\eqref{eq:CCM} outside $L_+^2(\R)$
\cite{ChapoutoForlanoLaurens,ChapoutoForlanoLaurensSubcritical,Hadama},
we retain the Galilean shift in order to clarify the relationship between our
approximation result and the multisoliton solutions found in
\cite{GerardLenzmann}.

The key observation behind Theorem~\ref{thm:intro} is a finite-dimensional
comparison.  Moser's solution formula writes the real particles as the
eigenvalues of an affine Hermitian matrix $H(t)$.  After choosing $c$ to be one
of the asymptotic particle velocities, the G\'erard--Lenzmann inverse spectral
formula for a corresponding multisoliton produces the matrix
\begin{equation}\label{eq:rankoneintro}
  \mathcal M_\eps(t)
  =H(t)-ctI-\ii\eps P,
\end{equation}
where $P$ is an orthogonal projection of rank one.  The Hermitian matrix
$H(t)-ctI$ has a minimum gap between its eigenvalues bounded below uniformly
in time.  The perturbation in~\eqref{eq:rankoneintro} has norm $\eps$,
independently of $t$, so its eigenvalues remain uniformly close to the real
particles.  A scalar characteristic-polynomial identity will give a sharper
$O(\eps^2)$ error for their real parts; see
Theorem~\ref{thm:quantitative} below.

To the best of our knowledge, identifying the Hermitian part of
$\mathcal M_\eps(t)$ with the shifted time-dependent position matrix
$H(t)-ctI$ of an arbitrary solution of~\eqref{eq:realCM}, and using this
comparison to approximate particle trajectories by multisolitons, have not
previously been considered.  This rank-one comparison is the key to the
uniform estimates proved below.

Recent work on the continuum model includes well-posedness results in and
outside the Hardy space
\cite{BadreddineGlobal,ChapoutoForlanoLaurens,
ChapoutoForlanoLaurensSubcritical,Hadama,KillipLaurensVisan}.
Its Hamiltonian formulation and
commuting flows were developed by Killip, Marsden, and Vi\c{s}an
\cite{KillipMarsdenVisan}.  Long-time growth, zero-dispersion behavior, and
soliton resolution are considered in
\cite{HoganKowalski,BadreddineZeroDisp,KimKwon}, respectively, while direct
scattering is studied in \cite{FrankRead,SunWangZhao}.  As far as we are aware,
the problem of approximating particles by multisolitons considered in this
article has not previously been addressed.

Section~\ref{sec:prelim} reviews the classical Moser matrix and the
multisoliton inverse formula.  Section~\ref{sec:proof} proves the pole-tracking
estimates and establishes density concentration.  Section~\ref{sec:large-time}
derives the large-time pole-height asymptotics, and Section~\ref{sec:example}
gives the two-particle formulas explicitly.

\section{Particles and multisolitons}\label{sec:prelim}

\subsection{The classical particle system}

The Hamiltonian of~\eqref{eq:realCM} is
\begin{equation}\label{eq:hamiltonian}
  \mathcal E(x,\dot x)
  :=\frac12\sum_{j=1}^N\dot x_j^2
  +4\sum_{1\le j<k\le N}\frac{1}{(x_j-x_k)^2}.
\end{equation}
It is constant along every solution.  In particular, when $N\ge2$,
$\mathcal E(x,\dot x)>0$ and
\begin{equation}\label{eq:energygap}
  |x_j(t)-x_k(t)|
  \ge \frac{2}{\sqrt{\mathcal E(x,\dot x)}},
  \qquad j\ne k,\quad t\in\R.
\end{equation}
Consequently, the minimum gap between particles,
\begin{equation}\label{eq:dstar}
  d_*:=\inf_{t\in\R}\min_{j\ne k}|x_j(t)-x_k(t)|,
\end{equation}
satisfies $d_*\ge2/\sqrt{\mathcal E(x,\dot x)}>0$.

Following the presentation in \cite{Ruijsenaars}, we recall the matrix form of
Moser's integration.  After a fixed relabelling, we may assume that
$x_1(0)<\cdots<x_N(0)$.  Set $v_j=\dot x_j(0)$ and form the diagonal matrix
\[
  X_0=\diag(x_1(0),\ldots,x_N(0)).
\]
Define also the Hermitian matrix $L_0$ by
\begin{equation}\label{eq:L0}
  (L_0)_{jk}
  =v_j\delta_{jk}
  -\frac{2\ii(1-\delta_{jk})}{x_j(0)-x_k(0)}.
\end{equation}
Using the integrability of~\eqref{eq:realCM}, Moser proved that the particle
positions at time $t\in\R$ are the eigenvalues of the matrix $X_0+tL_0$
\cite{Moser}.  A direct calculation gives the commutator identity
\[
  [X_0,L_0]=-2\ii(\one\one^*-I),
\]
where $\one=(1,\ldots,1)^T\in\C^N$.

The eigenvalues $c_1,\ldots,c_N$ of $L_0$ are distinct.  Indeed, if $F$ is the
spectral projection of $L_0$ associated with an eigenvalue $c$, then
$F[X_0,L_0]F=0$.  Hence the commutator identity gives
\[
  F=(F\one)(F\one)^*,
\]
so $F$ has rank one.

Diagonalize $U^*L_0U=\diag(c_1,\ldots,c_N)$ and put $A=U^*X_0U$.  The
commutator identity gives us that
$[A,\diag(c)]=-2\ii(ww^*-I)$, where $w=U^*\one$.  Its diagonal entries give
$|w_j|=1$, so, after possibly conjugating by a diagonal unitary matrix, we may
assume that $w=\one$.  The off-diagonal entries then give
\[
  A_{jk}=\frac{2\ii}{c_j-c_k}\qquad(j\ne k).
\]
The diagonal entries of $A$ are real; write them as $\gamma_j$.  Hence the
ordered components of the solution are the eigenvalues of
\begin{equation}\label{eq:H}
  H(t)_{jk}
  =(\gamma_j+c_jt)\delta_{jk}
  +\frac{2\ii(1-\delta_{jk})}{c_j-c_k}.
\end{equation}
Conversely, the ordered eigenvalues of~\eqref{eq:H} give a global solution
of~\eqref{eq:realCM}.  The $c_j$ are its asymptotic velocities.  Complex
conjugation reverses the sign of the imaginary off-diagonal entries without
changing the particle positions.

For later use, fix an index $m\in\{1,\ldots,N\}$, let $e_m$ denote the $m$th
standard coordinate vector in $\C^N$, and write
\begin{equation}\label{eq:P}
  P_m=e_me_m^*.
\end{equation}
The following elementary property ensures that the dissipative perturbation
in~\eqref{eq:rankoneintro} moves every eigenvalue strictly into the lower
half-plane.

\begin{lemma}[Nonzero coordinates]\label{lem:nonzero}
Every eigenvector of the matrix $H(t)$ in~\eqref{eq:H} has all of its
coordinates nonzero.
\end{lemma}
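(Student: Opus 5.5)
We will use the commutator identity for $H(t)$. Write $C=\diag(c_1,\ldots,c_N)$ and $D(t)=\diag(\gamma_j+c_jt)$. The off-diagonal part of $H(t)$ is determined by $C$: since $(C H - H C)_{jk}=(c_j-c_k)H_{jk}$, we get
\[
  [C,H(t)]=2\ii(\one\one^*-I),
\]
for every $t$. The diagonal part contributes nothing, and the off-diagonal entries give $(c_j-c_k)\cdot 2\ii/(c_j-c_k)=2\ii$. This is just the identity $[A,\diag(c)]=-2\ii(ww^*-I)$ with $w=\one$, rearranged; the time-dependent term $tC$ commutes with $C$.

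Now let $H(t)u=\lambda u$ with $u\ne0$ and $\lambda\in\R$. We pair the commutator identity with $u$:
\[
  u^*[C,H]u=u^*CHu-u^*HCu=\lambda u^*Cu-\lambda u^*Cu=0,
\]
using $u^*H=\lambda u^*$ because $H$ is Hermitian. Hence
\[
  0=2\ii\bigl(|\one^*u|^2-\norm{u}^2\bigr),
\]
so $|\one^*u|=\norm{u}\neq0$. Next we apply $(H-\lambda)$ to $Cu$:
\[
  (H-\lambda)Cu=HCu-CHu+C(H-\lambda)u=-[C,H]u=-2\ii\bigl(\one(\one^*u)-u\bigr)=2\ii u-2\ii(\one^*u)\one .
\]

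Suppose some coordinate vanishes, say $u_j=0$. We want to propagate zeros through the entries and reach a contradiction. Take the $j$th coordinate of the eigenvalue equation:
\[
  (\gamma_j+c_jt-\lambda)u_j+\sum_{k\ne j}\frac{2\ii u_k}{c_j-c_k}=0,
\]
which gives $\sum_{k\ne j}u_k/(c_j-c_k)=0$. This alone does not suffice, so we use the cleaner route: replace $u$ by $Cu$ and repeat. A more direct approach is the following. From the $j$th coordinate of $(H-\lambda)Cu=2\ii u-2\ii(\one^*u)\one$,
\[
  (\gamma_j+c_jt-\lambda)c_ju_j+\sum_{k\ne j}\frac{2\ii c_ku_k}{c_j-c_k}=2\ii u_j-2\ii(\one^*u).
\]
With $u_j=0$ this becomes $\sum_{k\ne j}c_ku_k/(c_j-c_k)=-(\one^*u)$. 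Subtract $c_j$ times the previous relation (which is zero), and use $c_k/(c_j-c_k)-c_j/(c_j-c_k)=-1$. This gives
\[
  -\sum_{k\ne j}u_k=-(\one^*u).
\]
That is tautological, since $u_j=0$, so this identity alone does not close the argument.

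The main obstacle is therefore to find the right contradiction. The plan is to use the spectral-projection argument already in the paper, with the roles of the matrices swapped. Let $F$ be the spectral projection of $H(t)$ for $\lambda$. Then $F[C,H]F=0$ gives $F=(F\one)(F\one)^*$, so eigenvalues of $H(t)$ are simple and $u$ is proportional to $F\one$.

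Suppose $u_j=e_j^*u=0$, with $u=F\one$ normalized so that $\norm{u}^2=\one^*u$. We then use the fact that $e_j$ is an eigenvector of $C$. The vector $w=(H-\lambda)Ce_j$ has components $w_k=2\ii c_j/(c_k-c_j)$ for $k\ne j$. Then
\[
  u^*w=u^*(H-\lambda)Ce_j=0 .
\]
Alternatively, take the $e_j$ entry of the resolvent identity: for $z\notin\Spec H$, the relation $[C,(H-z)^{-1}]=-(H-z)^{-1}[C,H](H-z)^{-1}$ gives
\[
  0=e_j^*[C,R]e_j=-2\ii\bigl(|e_j^*R\one|^2-e_j^*R^2e_j\bigr),
\]
that is, $e_j^*R(z)^2e_j=(e_j^*R(z)\one)^2$, where $R=(H-z)^{-1}$. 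The residue at the double pole $z=\lambda$ is $|u_j|^2/\norm{u}^2$ on the left side, and it must match the double-pole coefficient $(u_j\,\overline{\one^*u}/\norm{u}^2)^2$ on the right. We will compare the two sides near $z=\lambda$ to force consistency. The expected conclusion is that $u_j=0$ kills the pole on the right, while the left side then has no pole either, which is consistent; so we will instead exploit the simple-pole terms. Concretely, $f(z)=e_j^*R(z)\one$ satisfies $f(z)^2=-f_j'(z)$ with $f_j(z)=e_j^*R(z)e_j$. Writing $f_j(z)=\sum_\mu|u^\mu_j|^2/(\mu-z)$ and $f(z)=\sum_\mu u^\mu_j\overline{(\one^*u^\mu)}/(\mu-z)$ over normalized eigenvectors, and matching coefficients of $(\mu-z)^{-2}$, we get $|u^\mu_j|^2=(u^\mu_j)^2\,\overline{(\one^*u^\mu)}^2$. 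This is consistent with $u^\mu_j=0$, so the final step will compare behaviour at $z\to\infty$ instead. The expansion gives $f(z)\sim -1/z$ and $f_j'(z)\sim 1/z^2$, which again holds for all $u$. If this still fails, the fallback is Cauchy-matrix invertibility: $u_j=0$ makes the remaining components of $u$ satisfy homogeneous Cauchy-type equations $\sum_{k\ne j}u_k/(c_j-c_k)=0$ jointly for the eigenvector family. From these we would derive $u=0$ by induction on $N$, using the deflated matrix on $\{k\ne j\}$, which has the same structure~\eqref{eq:H}.
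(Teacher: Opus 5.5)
Your preliminary facts are correct: $[C,H(t)]=2\ii(\one\one^*-I)$, $|\one^*u|=\norm{u}$, and simplicity of the spectrum from $F[C,H]F=0$. But the proposal never reaches a contradiction from $u_j=0$. Each route you try is, as you note yourself, consistent with a vanishing coordinate, and the final fallback is only announced.

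The resolvent-of-$H$ route fails for a structural reason. The correct identity is
\[
  e_j^*R(z)^2e_j=\bigl(e_j^*R(z)\one\bigr)\bigl(\one^*R(z)e_j\bigr),
\]
not $(e_j^*R\one)^2$, since $H^T=\overline H\ne H$. If $u^\mu_j=0$, the pole at $\mu$ simply disappears from both sides. So matching Laurent coefficients at eigenvalues of $H$, or at infinity, only returns $|\one^*u^\mu|=1$, which you already know.

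The information carried by $u_j=0$ sits in the $j$th row of the eigenvalue equation, $\sum_{k\ne j}u_k/(c_j-c_k)=0$. That is a statement at the point $c_j$, which lies in the spectrum of $C$, not of $H$.

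The deflation fallback has the same hole. It yields an eigenvector of a smaller matrix of the same form that also satisfies one extra Cauchy-type linear constraint. Cauchy-matrix invertibility is irrelevant to a single constraint, and the induction hypothesis (nonzero coordinates) does not exclude it either.

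The missing step is to use your commutator identity with the resolvent of $C$ instead of $H$. Set $G(z)=(z-C)^{-1}$. Then $[H,G]=-G[C,H]G=-2\ii\,G(\one\one^*-I)G$, and since $\lambda$ is real, $u^*[H,G]u=0$. This gives
\[
  R^\sharp(z)R(z)=\sum_{k=1}^N\frac{|u_k|^2}{(z-c_k)^2},
  \qquad
  R(z)=\sum_{k=1}^N\frac{u_k}{z-c_k},\quad
  R^\sharp(z)=\sum_{k=1}^N\frac{\overline{u_k}}{z-c_k}.
\]
If $u_j=0$, both sides are regular at $c_j$, and the $j$th row of the eigenvalue equation says $R(c_j)=0$. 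Evaluating at $z=c_j$ gives $\sum_{k\ne j}|u_k|^2/(c_j-c_k)^2=0$, hence $u=0$, a contradiction.

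This is exactly the identity \eqref{eq:rationalidentity} in the paper. The paper derives it by Mittag--Leffler, using $\Re(\overline{u_k}S_k)=0$ from the imaginary part of the $k$th row. Your commutator framework gives the same identity in one line once the right resolvent is used.
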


\begin{proof}
Fix $t$, let $u\in\C^N$ be an eigenvector of $H$ with eigenvalue $\xi\in\R$,
and set
\[
  S_k:=\sum_{j\ne k}\frac{u_j}{c_k-c_j}.
\]
The $k$th coordinate equation is
\[
  (\gamma_k+c_kt-\xi)u_k+2\ii S_k=0.
\]
After multiplication by $\overline{u_k}$, its imaginary part gives
$\Re(\overline{u_k}S_k)=0$.

Introduce the rational functions
\[
  R(z)=\sum_{j=1}^N\frac{u_j}{z-c_j},
  \qquad
  R^\sharp(z)=\sum_{j=1}^N\frac{\overline{u_j}}{z-c_j}.
\]
The product $R(z)R^\sharp(z)$ is a rational function with poles at the $c_k$.
The principal part of its Laurent expansion at $z=c_k$ is
\[
  \frac{|u_k|^2}{(z-c_k)^2}
  +\frac{2\Re(\overline{u_k}S_k)}{z-c_k}
  =\frac{|u_k|^2}{(z-c_k)^2}.
\]
Since $R(z)R^\sharp(z)\to0$ as $|z|\to\infty$, the Mittag--Leffler theorem
yields
\begin{equation}\label{eq:rationalidentity}
  R(z)R^\sharp(z)
  =\sum_{j=1}^N\frac{|u_j|^2}{(z-c_j)^2}.
\end{equation}
If $u_m=0$, then the $m$th coordinate equation says $R(c_m)=0$.
Evaluating~\eqref{eq:rationalidentity} at $c_m$ gives
\[
  0=R(c_m)R^\sharp(c_m)
  =\sum_{j\ne m}\frac{|u_j|^2}{(c_m-c_j)^2},
\]
which requires $u=0$, a contradiction.
\end{proof}

\subsection{The continuum multisoliton formula}

We use the following part of the G\'erard--Lenzmann inverse-spectral
construction
\cite[Proposition~6.2, Theorem~6.1, and Remark~6.1]{GerardLenzmann}.  Let
$\lambda_1,\ldots,\lambda_N$ be distinct real numbers, choose an index $m$
with $\lambda_m=0$, let $\rho>0$, and choose
$\gamma_1,\ldots,\gamma_N\in\R$.  The surjectivity of the spectral mapping in
\cite[Remark~6.1]{GerardLenzmann} allows these $\gamma_j$ to be prescribed
arbitrarily.  Define
\begin{equation}\label{eq:GLmatrix}
  \mathcal M(t)
  =2\diag(\lambda_1,\ldots,\lambda_N)t+W,
\end{equation}
where
\begin{equation}\label{eq:GLW}
  W_{jk}=
  \begin{cases}
    \gamma_j-\ii\rho\,\delta_{jm},&j=k,\\[0.4ex]
    \displaystyle\frac{\ii}{\lambda_j-\lambda_k},&j\ne k.
  \end{cases}
\end{equation}
Then, for a constant phase $\phi$, the formula
\begin{equation}\label{eq:GLinverse}
  q(t,x)
  =\sqrt{2\rho}\,\e^{\ii\phi}
  \ip{(\mathcal M(t)-xI)^{-1}\one}{e_m}
\end{equation}
defines a rational multisoliton in $C(\R;H_+^1(\R))$.  Its poles are the
eigenvalues of $\mathcal M(t)$.  When the poles
$z_j(t)=\alpha_j(t)-\ii\beta_j(t)$ are simple, with
$\beta_j(t)>0$, the inverse formula also gives the exact identity
\begin{equation}\label{eq:poisson}
  |q(t,x)|^2
  =\sum_{j=1}^N
  \frac{2\beta_j(t)}
  {(x-\alpha_j(t))^2+\beta_j(t)^2}.
\end{equation}

To connect~\eqref{eq:GLmatrix} to the real particles, choose any index $m$,
take the $\gamma_j$ to be the specific values appearing in~\eqref{eq:H}, and
set
\begin{equation}\label{eq:parameters}
  \lambda_j=\frac{c_j-c_m}{2},
  \qquad
  \rho=\eps.
\end{equation}
With this choice of parameters, the matrix~\eqref{eq:GLmatrix} becomes
\begin{equation}\label{eq:keymatrix}
  \mathcal M_\eps(t)
  =H(t)-c_mtI-\ii\eps P_m.
\end{equation}
Thus the poles of the multisoliton~\eqref{eq:GLinverse} are a rank-one
perturbation of the eigenvalues
\begin{equation}\label{eq:y}
  y_j(t):=x_j(t)-c_mt.
\end{equation}
This is the particle-level translation used in Theorem~\ref{thm:intro}.

\section{Approximating particles by multisolitons}\label{sec:proof}

Our goal in this section is to prove a quantitative version of
Theorem~\ref{thm:intro}.  Recalling the definition of $d_*$
from~\eqref{eq:dstar}, we have the following.

\begin{theorem}[Quantitative uniform approximation]\label{thm:quantitative}
Let $N\ge2$, let $H(t)$ be given by~\eqref{eq:H}, and fix
$m\in\{1,\ldots,N\}$.  For all sufficiently small $\eps>0$, the matrix
$\mathcal M_\eps(t)$ in~\eqref{eq:keymatrix} has $N$ simple eigenvalues in
$\Cminus$.  They may be labelled $z_j^\eps(t)$ so that
\begin{equation}\label{eq:complexerror}
  \sup_{t\in\R}\max_j
  |z_j^\eps(t)-y_j(t)|
  \le\eps.
\end{equation}
Here $y_j(t)$ is defined as in~\eqref{eq:y}.

Let $E_j(t)$ be the spectral projection of $H(t)$ associated with the
eigenvalue $x_j(t)$, and define
\begin{equation}\label{eq:a}
  a_j(t):=\Tr(P_mE_j(t)).
\end{equation}
Then $a_j(t)>0$, $\sum_ja_j(t)=1$, and
\begin{equation}\label{eq:expansion}
  z_j^\eps(t)
  =y_j(t)-\ii\eps a_j(t)
  -\eps^2a_j(t)
    \sum_{k\ne j}\frac{a_k(t)}{y_j(t)-y_k(t)}
  +r_j^\eps(t),
\end{equation}
where
\begin{equation}\label{eq:remainder}
  \sup_{t\in\R}\max_j
  \left(
    |r_j^\eps(t)|+|\partial_t r_j^\eps(t)|
  \right)
  \le C\eps^3.
\end{equation}
Consequently,
\begin{align}
  \sup_{t\in\R}\max_j
  |\Re z_j^\eps(t)-y_j(t)|
  &\le C\eps^2,\label{eq:realerror}\\
  \sup_{t\in\R}\max_j
  |\Re\dot z_j^\eps(t)-\dot y_j(t)|
  &\le C\eps^2.\label{eq:velocityerror}
\end{align}
If $\beta_j^\eps(t):=-\Im z_j^\eps(t)$, then
\begin{equation}\label{eq:heights}
  0<\beta_j^\eps(t)<\eps,
  \qquad
  \sum_{j=1}^N\beta_j^\eps(t)=\eps,
  \qquad
  \beta_j^\eps(t)=\eps a_j(t)+O(\eps^3),
\end{equation}
uniformly in time.

Let $q^\eps$ be the multisoliton defined by~\eqref{eq:GLinverse} and
\eqref{eq:parameters}.  Then
\begin{equation}\label{eq:densityestimate}
  \sup_{t\in\R}
  d_{\BL}\left(
    |q^\eps(t,x)|^2\dd x,\,
    2\pi\sum_{j=1}^N\delta_{y_j(t)}
  \right)
  \le C\eps(1+|\log\eps|).
\end{equation}
All constants $C$ above depend only on $N$, $x(0)$, and $\dot x(0)$; in
particular, they are uniform in $t$ and $\eps$.
\end{theorem}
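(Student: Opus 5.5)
Diagonalize $H(t)$ with orthonormal eigenvectors $u_j(t)$, $H(t)u_j=x_j(t)u_j$, so $E_j=u_ju_j^*$ and $a_j=|(u_j)_m|^2$. By Lemma~\ref{lem:nonzero}, $a_j(t)>0$, and $\sum_ja_j=\Tr P_m=1$. The starting point is the rank-one determinant identity
\[
  \det(\mathcal M_\eps(t)-z)=\det(H(t)-c_mt-z)\Bigl(1-\ii\eps\sum_{j}\frac{a_j(t)}{y_j(t)-z}\Bigr).
\]
Away from the spectrum, the eigenvalues $z$ of $\mathcal M_\eps(t)$ are therefore exactly the roots of the secular equation
\[
  F(z):=1-\ii\eps\sum_j\frac{a_j}{y_j-z}=0,
\]
and all $N$ roots are off the real axis.

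\textbf{Localization and heights.} Since $\norm{\ii\eps P_m}=\eps$ and $H-c_mt$ is Hermitian, Bauer--Fike places every eigenvalue within $\eps$ of some $y_j$. Because the gaps satisfy $|y_j-y_k|\ge d_*$ uniformly in $t$, for $\eps<d_*/2$ the discs $D(y_j,\eps)$ are disjoint. A continuity argument (homotopy $s\mapsto \ii s\eps P_m$, $s\in[0,1]$) then shows that each disc contains exactly one eigenvalue; call it $z_j^\eps$. This gives simplicity and~\eqref{eq:complexerror}.

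For the heights, write $\mathcal M_\eps v=zv$ with $\norm v=1$. Taking imaginary parts of $\ip{\mathcal M_\eps v}{v}$ gives $\Im z=-\eps|v_m|^2$. Here $v_m\ne0$, since otherwise $v$ would be an eigenvector of $H$ with $v_m=0$, contradicting Lemma~\ref{lem:nonzero}. Hence $\beta_j^\eps\in(0,\eps]$. The trace identity gives $\sum_j\beta_j^\eps=-\Im\Tr\mathcal M_\eps=\eps$, and combined with positivity of all $N\ge2$ heights this forces the strict inequality $\beta_j^\eps<\eps$.

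\textbf{Expansion.} Inside $D(y_j,\eps)$, write $z=y_j+w$. The secular equation becomes
\[
  w=-\ii\eps a_j\Bigl(1-\ii\eps\sum_{k\ne j}\frac{a_k}{y_k-y_j-w}\Bigr)^{-1}.
\]
The right-hand side $G(w)$ is holomorphic in $|w|\le\eps$, with $|G|\le 2\eps$ and $|G'|\le C\eps^2/d_*^2$. Here $C$ depends only on $d_*$ and $N$, because $0<a_k\le1$; note that $d_*$ depends only on $\mathcal E$, i.e.\ on $x(0),\dot x(0)$. Thus $G$ is a contraction, and iterating twice gives
\[
  w=-\ii\eps a_j-\eps^2a_j\sum_{k\ne j}\frac{a_k}{y_j-y_k}+O(\eps^3),
\]
with constants uniform in $t$. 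This yields \eqref{eq:expansion} and the bound on $r_j^\eps$. The real-part estimate~\eqref{eq:realerror} follows because the $\eps^2$ term is real. The height asymptotics in \eqref{eq:heights} follow because the first correction to $\Im w$ appears only at order $\eps^3$.

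\textbf{Time derivatives (main obstacle).} The bound on $\partial_tr_j^\eps$ is the step I expect to be hardest, since $\dot a_j$ and $\dot y_j$ must be bounded uniformly in $t$. Here $\dot y_j=\dot x_j-c_m$ is bounded by energy conservation. By first-order perturbation theory with the simple gap $d_*$,
\[
  \dot u_j=\sum_{k\ne j}\frac{\ip{\dot Hu_j}{u_k}}{x_j-x_k}\,u_k ,
\]
and $\dot H=\diag(c)$ is constant, so $|\dot a_j|\le C/d_*$. Differentiating the fixed-point equation implicitly then gives $\partial_tw=\partial_tG+G'\,\partial_tw$. Since $|G'|=O(\eps^2)$ and the explicit $t$-derivative of each Taylor coefficient is bounded, the remainder derivative is $O(\eps^3)$. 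This yields~\eqref{eq:velocityerror}.

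\textbf{Density.} By~\eqref{eq:poisson}, $|q^\eps|^2\dd x=2\pi\sum_j P_{\beta_j}\ast\delta_{\alpha_j}$, where $P_\beta$ is the Poisson kernel of mass one. Given a test function $\varphi$ with $\norm{\varphi}_{L^\infty}\le1$ and $\Lip(\varphi)\le1$, the shift from $y_j$ to $\alpha_j$ costs at most $2\pi|\alpha_j-y_j|\le C\eps^2$. For the smearing, estimate
\[
  \int P_\beta(s)\min(|s|,2)\dd s\le C\beta(1+|\log\beta|),
\]
splitting the integral at $|s|=1$. Since $\beta\le\eps$ and $\beta\mapsto\beta(1+|\log\beta|)$ is increasing for small $\beta$, summing over $j$ gives~\eqref{eq:densityestimate}.
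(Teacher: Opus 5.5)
Your proposal is correct and follows essentially the same route as the paper. The rank-one determinant identity reduces the problem to the scalar displacement equation $w(1+\ii\eps S_j(w))=-\ii\eps a_j$. The heights come from the dissipativity identity $\Im z=-\eps|v_m|^2$ together with the trace. The $C^1$ remainder bound comes from implicit differentiation using uniform bounds on $\dot a_j$ and $\dot y_j$. The density estimate uses the same Poisson-kernel splitting.

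The differences are cosmetic. You use Bauer--Fike plus a homotopy count where the paper uses a Neumann series plus Rouch\'e. You also phrase the expansion as a fixed-point problem. There you do not need a genuine contraction, and your bound $|G|\le2\eps$ does not by itself show that $G$ maps $\overline{D(0,\eps)}$ into itself. Since the localization step already places the fixed point $w_j$ in that disc, the Lipschitz bound $|w_j-G(0)|\le\sup_{|w|\le\eps}|G'|\,|w_j|=O(\eps^3)$ suffices.
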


\begin{remark}
For $N=1$, one has
$z^\eps(t)=x(t)-c_1t-\ii\eps$, and all conclusions follow directly.
\end{remark}

\subsection{Localization and the lower half-plane}

Our starting point for the proof of Theorem~\ref{thm:quantitative} will be to
understand how the eigenvalues of $\mathcal M_\eps(t)$ and $H(t)$ are related.

Let $K=K^*$ be an $N\times N$ complex matrix with simple eigenvalues
$y_1<\cdots<y_N$ and spectral projections $E_1,\ldots,E_N$, and let $P$ be an
orthogonal projection of rank one.  We make the assumption that
\begin{equation}\label{eq:nonzerooverlap}
  \Tr(PE_j)>0
\end{equation}
for every $j$, and write $a_k:=\Tr(PE_k)$.  Away from $\Spec K$, the matrix
determinant lemma
\[
  \det(G+uv^*)=\det(G)\bigl(1+v^*G^{-1}u\bigr)
\]
gives
\begin{equation}\label{eq:detlemma}
  \det(zI-K+\ii\eps P)
  =\det(zI-K)
    \left(
      1+\ii\eps\sum_{k=1}^N\frac{a_k}{z-y_k}
    \right).
\end{equation}

\begin{lemma}[Rank-one localization]\label{lem:localization}
Let $K=K^*$ have simple eigenvalues
$y_1<\cdots<y_N$ with minimal gap at least $d>0$, and let $P$ be an
orthogonal projection of rank one satisfying~\eqref{eq:nonzerooverlap}.  Then,
if $\eps>0$ is sufficiently small depending on $d$, each disk
$\overline{D(y_j,\eps)}$ contains exactly one eigenvalue of
$K-\ii\eps P$, counted according to algebraic multiplicity.  These
eigenvalues are simple and lie strictly in $\Cminus$.
\end{lemma}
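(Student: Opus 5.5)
We will work with the scalar secular function coming from the matrix determinant lemma~\eqref{eq:detlemma}. Write
\[
  f_\eps(z):=1+\ii\eps\sum_{k=1}^N\frac{a_k}{z-y_k},
\]
so that $\det(zI-K+\ii\eps P)=\det(zI-K)\,f_\eps(z)=\prod_k(z-y_k)+\ii\eps\sum_k a_k\prod_{l\ne k}(z-y_l)=:p_\eps(z)$. Note $p_\eps$ is a polynomial identity valid for all $z$, and $\sum_k a_k=\Tr P=1$, $0<a_k\le1$. Our plan is to count zeros of $p_\eps$ in each disk by Rouch\'e's theorem, comparing $p_\eps$ with $p_0(z)=\prod_k(z-y_k)$, and then to use an energy (imaginary part) identity for the lower half-plane.

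\textbf{Localization.} Assume first $\eps<d/2$, so the closed disks $\overline{D(y_j,\eps)}$ are pairwise disjoint. On the circle $|z-y_j|=\eps$ we have $|z-y_k|\ge d-\eps\ge d/2$ for $k\ne j$, so
\[
  \Bigl|\ii\eps\sum_k\frac{a_k}{z-y_k}\Bigr|
  \le \frac{\eps a_j}{\eps}+\eps\sum_{k\ne j}\frac{2a_k}{d}
  = a_j+\frac{2\eps(1-a_j)}{d}.
\]
This is $<1$ as soon as $2\eps/d<1$, since $a_j+(2\eps/d)(1-a_j)<a_j+(1-a_j)=1$ when $a_j<1$, while if $a_j=1$ the sum over $k\ne j$ vanishes and only $a_j=1$ remains — a boundary case. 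To avoid equality, we will instead use the circle $|z-y_j|=r$ with $r$ slightly larger than $\eps$ for the count, or more cleanly argue as follows: on $|z-y_j|=\eps$, $|p_\eps-p_0|<|p_0|$ strictly holds unless $a_j=1$; when $a_j=1$ all other $a_k=0$, which contradicts~\eqref{eq:nonzerooverlap} for $N\ge2$ (and for $N=1$ the eigenvalue is exactly $y_1-\ii\eps$, on the boundary, so fine). Hence Rouch\'e gives exactly one zero of $p_\eps$ in the open disk $D(y_j,\eps)$, and since $p_\eps$ has no zeros on the circle (strict inequality), exactly one in the closed disk. Having $N$ disjoint disks each with one zero, and $\deg p_\eps=N$, every eigenvalue is accounted for and each is simple. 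The smallness needed is just $\eps<d/2$.

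\textbf{Lower half-plane.} Let $z$ be a zero of $p_\eps$; by the above $z\ne y_k$ for all $k$ (indeed $p_\eps(y_k)=\ii\eps a_k\prod_{l\ne k}(y_k-y_l)\ne0$ by~\eqref{eq:nonzerooverlap}), so $f_\eps(z)=0$. Taking imaginary parts of $\sum_k a_k/(z-y_k)=\ii/\eps$ after writing $1/(z-y_k)=(\bar z-y_k)/|z-y_k|^2$ gives
\[
  -\Im z\sum_k\frac{a_k}{|z-y_k|^2}=0\quad\text{from the real part},\qquad\text{i.e.}\quad \Re\bigl(\ii\eps\textstyle\sum_k a_k/(z-y_k)\bigr)=\eps\Im z\sum_k\frac{a_k}{|z-y_k|^2}=-1.
\]
Since all $a_k>0$, the sum is positive, so $\Im z<0$. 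Alternatively, for an eigenvector $u$ of $K-\ii\eps P$ with eigenvalue $z$, $\Im z\,\|u\|^2=-\eps\|Pu\|^2\le0$, and equality would force $Pu=0$, making $u$ an eigenvector of $K$ orthogonal to the range of $P$, contradicting $a_j>0$. The only delicate point is the Rouch\'e boundary case, handled above via $a_j<1$ for $N\ge2$.
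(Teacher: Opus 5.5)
Your proof is correct, but the localization step takes a different route from the paper's.

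\textbf{Localization.} The paper argues in two stages. First, a Neumann-series bound $\norm{(z-K)^{-1}(\ii\eps P)}\le\eps/\dist(z,\Spec K)$ confines the whole spectrum of $K-\ii\eps P$ to $\bigcup_j\overline{D(y_j,\eps)}$. Then Rouch\'e on the larger circles $|z-y_j|=d/3$, with the uniform bound $3\eps/d<1$, gives one root in each $D(y_j,d/3)$.

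You instead apply Rouch\'e directly on the circles $|z-y_j|=\eps$ and finish the count using $\deg p_\eps=N$. This removes the containment step and allows $\eps<d/2$ instead of $\eps<d/3$.

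The price is that your Rouch\'e estimate is sharp. The $k=j$ term contributes exactly $a_j$, so strict inequality depends on $a_j<1$, i.e.\ on the positivity hypothesis \eqref{eq:nonzerooverlap} with $N\ge2$. The margin $(1-a_j)(1-2\eps/d)$ is also not uniform in the weights: if $P=E_j$, the eigenvalue $y_j-\ii\eps$ would lie on your contour. In the paper's version the counting uses only $a_k\ge0$ and $\sum_ka_k=1$. So localization holds for every rank-one projection, and positivity of the $a_k$ is needed only for $\Im z<0$. Since the lemma is applied pointwise in $t$ with a threshold depending only on $d_*$, either version suffices for Theorem~\ref{thm:quantitative}.

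\textbf{Lower half-plane.} Your dissipative argument is the same as the paper's. There is one slip in your secular-equation alternative. The real part of $\sum_ka_k/(z-y_k)=\ii/\eps$ gives $\sum_ka_k(\Re z-y_k)/|z-y_k|^2=0$, not $-\Im z\sum_ka_k/|z-y_k|^2=0$. It is the imaginary part that gives $\eps\Im z\sum_ka_k/|z-y_k|^2=-1$, which is the relation you then correctly use.
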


\begin{proof}
If $\dist(z,\Spec K)>\eps$, then
\[
  \norm{(z-K)^{-1}(\ii\eps P)}
  \le\frac{\eps}{\dist(z,\Spec K)}<1.
\]
Hence $z-(K-\ii\eps P)$ is invertible by a Neumann series.  The spectrum is
therefore contained in the union of the closed $\eps$-disks centered at the
$y_j$.

For the root count, \eqref{eq:nonzerooverlap} gives $a_k>0$, while
$\sum_ka_k=\Tr(P)=1$.  On the circle $|z-y_j|=d/3$,
\[
  \eps\left|
    \sum_{k=1}^N\frac{a_k}{z-y_k}
  \right|
  \le\eps\sum_{k=1}^N\frac{3a_k}{d}
  \le\frac{3\eps}{d}.
\]
This is less than one when $\eps<d/3$.  Rouch\'e's theorem, applied to the two
characteristic polynomials in~\eqref{eq:detlemma}, shows that the perturbed
polynomial has exactly one root in each disk $D(y_j,d/3)$.  Our choice of
$\eps$ ensures that these disks are disjoint and that each contains exactly
one of the disks $D(y_k,\eps)$.  Together with the preceding spectral
containment, this shows that each $\eps$-disk contains exactly one eigenvalue.
Since that eigenvalue has algebraic multiplicity one, it is simple.

Finally, if $(K-\ii\eps P)v=zv$, then
\begin{equation}\label{eq:dissipative}
  \Im z\,\norm v^2=-\eps\norm{Pv}^2\le0.
\end{equation}
If equality held, then $Pv=0$ and $Kv=zv$, contradicting
\eqref{eq:nonzerooverlap}.  Thus $\Im z<0$.
\end{proof}

Let us now apply Lemma~\ref{lem:localization} with
$K(t)=H(t)-c_mtI$ and $P=P_m$.  The minimum gap between its eigenvalues is
bounded below by $d_*$, uniformly in $t$.  Moreover, if $v$ is a unit
eigenvector of $H(t)$ and $E=vv^*$ is its spectral projection, then
Lemma~\ref{lem:nonzero} gives
\[
  \Tr(P_mE)=\ip{P_mv}{v}=|v_m|^2>0,
\]
which verifies~\eqref{eq:nonzerooverlap}.  This proves the simplicity,
lower-half-plane location, and~\eqref{eq:complexerror} in
Theorem~\ref{thm:quantitative}.

\subsection{The scalar displacement equation}

Fix $t$ temporarily and suppress it from the notation.  Let
$z_j^\eps$ be the eigenvalue near $y_j$ and write
\begin{equation}\label{eq:zeta}
  z_j^\eps=y_j+\zeta_j,
  \qquad \zeta_j\in\Cminus.
\end{equation}
Here $\zeta_j$ is a complex displacement; it need not be purely imaginary.

Define
\begin{equation}\label{eq:S}
  S_j(\zeta)
  :=\sum_{k\ne j}\frac{a_k}{y_j+\zeta-y_k}.
\end{equation}

\begin{lemma}[Displacement equation]\label{lem:scalar}
The displacement in~\eqref{eq:zeta} satisfies
\begin{equation}\label{eq:scalarequation}
  \zeta_j\bigl(1+\ii\eps S_j(\zeta_j)\bigr)
  =-\ii\eps a_j.
\end{equation}
Moreover,
\begin{equation}\label{eq:firstbound}
  |\zeta_j+\ii\eps a_j|
  \le\frac{\eps^2}{d_*-\eps},
\end{equation}
and
\begin{equation}\label{eq:secondorder}
  \zeta_j
  =-\ii\eps a_j
  -\eps^2a_j\sum_{k\ne j}\frac{a_k}{y_j-y_k}
  +O(\eps^3),
\end{equation}
uniformly for $t\in\R$.
\end{lemma}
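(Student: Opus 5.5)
The plan is to derive \eqref{eq:scalarequation} from the determinant identity \eqref{eq:detlemma}, then read off the two estimates by treating \eqref{eq:scalarequation} as a fixed-point relation for $\zeta_j$.

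\textbf{Step 1: the displacement equation.} By Lemma~\ref{lem:localization}, $z=z_j^\eps$ lies in $\overline{D(y_j,\eps)}$. If $z_j^\eps=y_j$, then $\zeta_j=0$; but $\zeta_j\in\Cminus$ strictly, so this is impossible and $z\notin\Spec K$. For $k\ne j$ we have $|z-y_k|\ge d_*-\eps>0$, so no other $y_k$ is hit either. Since $\det(zI-K)\ne0$ at a root of the left side of \eqref{eq:detlemma}, we get
\[
  1+\ii\eps\sum_k a_k/(z-y_k)=0.
\]
Isolating the $k=j$ term, which equals $a_j/\zeta_j$, and multiplying by $\zeta_j$ gives \eqref{eq:scalarequation} directly.

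\textbf{Step 2: the first-order bound.} Rewrite \eqref{eq:scalarequation} as
\[
  \zeta_j+\ii\eps a_j=-\ii\eps\,\zeta_jS_j(\zeta_j).
\]
From \eqref{eq:complexerror}, $|\zeta_j|\le\eps$. For $k\ne j$ we have $|y_j+\zeta_j-y_k|\ge d_*-\eps$, and $\sum_{k\ne j}a_k\le1$, so $|S_j(\zeta_j)|\le 1/(d_*-\eps)$. Multiplying these bounds gives \eqref{eq:firstbound}, uniformly in $t$ because $d_*$ is a time-independent gap bound.

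\textbf{Step 3: the second-order expansion.} Substitute $\zeta_j=-\ii\eps a_j+O(\eps^2)$ into the right side $-\ii\eps\zeta_jS_j(\zeta_j)$. Two pieces need control:
\begin{itemize}
\item $S_j$ is Lipschitz near $0$ with constant $\le\sum_{k\ne j}a_k/(d_*-\eps)^2$, so $S_j(\zeta_j)=S_j(0)+O(\eps)$.
\item $\zeta_j=-\ii\eps a_j+O(\eps^2)$ by Step 2.
\end{itemize}
Hence
\[
  -\ii\eps\,\zeta_jS_j(\zeta_j)=-\ii\eps(-\ii\eps a_j)S_j(0)+O(\eps^3)=-\eps^2a_jS_j(0)+O(\eps^3),
\]
which is \eqref{eq:secondorder}. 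All constants depend only on $d_*$ (say for $\eps\le d_*/2$), which is independent of $t$.

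The only delicate point is Step 1: we must justify that $z_j^\eps$ avoids $\Spec K$, so that dividing by $\det(zI-K)$ is legitimate. This is handled by the strict lower-half-plane conclusion of Lemma~\ref{lem:localization}, which rules out $z_j^\eps=y_j$, together with the disk localization, which keeps $z_j^\eps$ a distance at least $d_*-\eps$ from every other $y_k$. The rest is bookkeeping with the uniform gap $d_*$.
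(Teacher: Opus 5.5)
Your proposal is correct and follows the paper's proof essentially step for step. The paper likewise obtains \eqref{eq:scalarequation} from \eqref{eq:detlemma} using $\zeta_j\in\Cminus$, bounds $|S_j(\zeta_j)|\le 1/(d_*-\eps)$ via $\sum_k a_k=1$, and derives \eqref{eq:secondorder} from the Lipschitz bound on $S_j$ together with \eqref{eq:firstbound}. Your extra check that $z_j^\eps$ stays away from the other $y_k$ is harmless but redundant, since the strict lower-half-plane location already keeps $z_j^\eps$ off the real spectrum of $K$.
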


\begin{proof}
Take $K(t)=H(t)-c_mtI$ and $P=P_m$ as above.  Since
$\zeta_j\in\Cminus$, the identity~\eqref{eq:scalarequation} follows from
applying~\eqref{eq:detlemma} with $z=y_j+\zeta_j$ and multiplying by
$\zeta_j$.

By~\eqref{eq:complexerror}, $|\zeta_j|\le\eps$.  Again using
$\sum_ka_k=1$, we have
\[
  |S_j(\zeta_j)|
  \le\frac{1}{d_*-\eps}.
\]
Consequently,~\eqref{eq:scalarequation} gives
\[
  |\zeta_j+\ii\eps a_j|
  =\eps|\zeta_j|\,|S_j(\zeta_j)|
  \le\frac{\eps^2}{d_*-\eps},
\]
which is~\eqref{eq:firstbound}.

For $|\zeta|\le\eps$,
\[
  |S_j'(\zeta)|
  \le\frac{1}{(d_*-\eps)^2}.
\]
It follows that
\[
  S_j(\zeta_j)=S_j(0)+O(\eps),
  \qquad
  S_j(0)=\sum_{k\ne j}\frac{a_k}{y_j-y_k},
\]
uniformly in time.  The exact decomposition
\begin{align*}
  \zeta_j
  &=-\ii\eps a_j-\ii\eps\zeta_jS_j(\zeta_j)\\
  &=-\ii\eps a_j-\eps^2a_jS_j(0)
    -\eps^2a_j\bigl[S_j(\zeta_j)-S_j(0)\bigr]\\
  &\hspace{3.5em}
    -\ii\eps(\zeta_j+\ii\eps a_j)S_j(\zeta_j)
\end{align*}
and~\eqref{eq:firstbound} now prove~\eqref{eq:secondorder}.
\end{proof}

\subsection{Uniform control after one time derivative}

Lemma~\ref{lem:scalar} describes the position of the poles of our
multisoliton.  In this subsection we consider the velocities, making the
corresponding expansion uniform in $C^1(\R_t)$.  The ordered eigenvalues and
spectral projections of $H(t)$ are $C^1$ because $H$ is $C^1$ and the minimum
gap between its eigenvalues is bounded below by $d_*$.  Indeed, we may readily
compute that
\begin{equation}\label{eq:eigenderivatives}
  \dot x_j=\Tr(E_j\dot H),
  \qquad
  \dot E_j
  =\sum_{k\ne j}
  \frac{E_k\dot H E_j+E_j\dot H E_k}{x_j-x_k}.
\end{equation}
These formulas follow either by differentiating the eigenvalue equations or
from the Riesz projection formula.  Since
$\dot H=\diag(c_1,\ldots,c_N)$, they imply uniform bounds for
$\dot x_j$, $\dot E_j$, and
\begin{equation}\label{eq:adot}
  \dot a_j=\Tr(P_m\dot E_j).
\end{equation}
The $c_j$ are the eigenvalues of $L_0$, so their uniform bound, and hence all
constants used here, depend only on $N$, $x(0)$, and $\dot x(0)$.

At fixed $\zeta$, differentiation of~\eqref{eq:S} gives
\begin{equation}\label{eq:Sderivatives}
  \partial_\zeta S_j(\zeta)
  =-\sum_{k\ne j}
    \frac{a_k}{(y_j+\zeta-y_k)^2},
\end{equation}
and
\begin{equation}\label{eq:Stime}
  \partial_t S_j(\zeta)
  =\sum_{k\ne j}
  \left[
    \frac{\dot a_k}{y_j+\zeta-y_k}
    -\frac{a_k(\dot y_j-\dot y_k)}
    {(y_j+\zeta-y_k)^2}
  \right].
\end{equation}
All quantities in~\eqref{eq:Sderivatives}--\eqref{eq:Stime} are uniformly
bounded when $|\zeta|\le\eps$ and $\eps<d_*/2$.
Differentiating the right-hand side of~\eqref{eq:Stime} with respect to
$\zeta$, and using the minimum gap together with the uniform bounds for
$a_k$, $\dot a_k$, and $\dot y_k$, gives
\[
  |\partial_\zeta\partial_tS_j(\zeta)|\le C.
\]
Consequently,
\[
  (\partial_tS_j)(\zeta_j)=(\partial_tS_j)(0)+O(\eps)
\]
uniformly in time.

Differentiating~\eqref{eq:scalarequation} now yields
\begin{equation}\label{eq:zetadot}
  \left(
    1+\ii\eps S_j
    +\ii\eps\zeta_j\partial_\zeta S_j
  \right)\dot\zeta_j
  =
  -\ii\eps\dot a_j
  -\ii\eps\zeta_j\partial_tS_j.
\end{equation}
The coefficient of $\dot\zeta_j$ is uniformly bounded away from zero for
small $\eps$.  In particular, \eqref{eq:zetadot} gives
\[
  \dot\zeta_j=-\ii\eps\dot a_j+O(\eps^2).
\]
Using this together with
$\zeta_j=-\ii\eps a_j+O(\eps^2)$ in
\eqref{eq:scalarequation} and~\eqref{eq:zetadot} gives the explicit
second-order formulas
\begin{align*}
  \zeta_j
  &=-\ii\eps a_j-\eps^2a_jS_j(0)+O(\eps^3),\\
  \dot\zeta_j
  &=-\ii\eps\dot a_j
    -\eps^2\bigl(\dot a_jS_j(0)+a_j\dot S_j(0)\bigr)
    +O(\eps^3),
\end{align*}
where
\[
  \dot S_j(0)
  =\sum_{k\ne j}
  \left[
    \frac{\dot a_k}{y_j-y_k}
    -\frac{a_k(\dot y_j-\dot y_k)}{(y_j-y_k)^2}
  \right].
\]
The bounds above make the remainders uniform in time.  Hence
\begin{equation}\label{eq:C1remainder}
  \sup_{t\in\R}\max_j
  \left|
    \partial_t^\ell
    \left[
      \zeta_j+\ii\eps a_j
      +\eps^2a_jS_j(0)
    \right]
  \right|
  \le C\eps^3,
  \qquad \ell=0,1.
\end{equation}
This proves~\eqref{eq:expansion}--\eqref{eq:velocityerror}.

Taking imaginary parts in~\eqref{eq:expansion} gives
$\beta_j^\eps=\eps a_j+O(\eps^3)$.  Strict positivity follows from
Lemma~\ref{lem:localization}, while
\[
  \sum_{j=1}^N\beta_j^\eps(t)
  =-\Im\Tr\mathcal M_\eps(t)
  =\eps\Tr(P_m)
  =\eps.
\]
The sum and strict positivity also imply $\beta_j^\eps<\eps$.  This proves
\eqref{eq:heights}.

\subsection{Density concentration}\label{sec:density}

We pass from pole locations to the multisoliton density.
For $0<h\le1$ and $a,b\in\R$, the Poisson kernel satisfies
\begin{equation}\label{eq:onepoisson}
  d_{\BL}\left(
    \frac{2h}{(x-a)^2+h^2}\dd x,\,
    2\pi\delta_b
  \right)
  \le
  Ch(1+|\log h|)+2\pi|a-b|.
\end{equation}
Indeed, for a test function in~\eqref{eq:dBL}, subtract $\varphi(a)$ and
split the integral into $|x-a|\le1$ and $|x-a|>1$.  The first part is bounded
using $|\varphi(x)-\varphi(a)|\le|x-a|$ and contributes
$Ch|\log h|$; the second uses
$|\varphi(x)-\varphi(a)|\le2$ and contributes $Ch$.  Replacing
$\varphi(a)$ by $\varphi(b)$ costs at most $2\pi|a-b|$.

Apply~\eqref{eq:onepoisson} to each term in~\eqref{eq:poisson}, with
\[
  a=\Re z_j^\eps(t),
  \qquad
  b=y_j(t),
  \qquad
  h=\beta_j^\eps(t).
\]
Equations~\eqref{eq:realerror} and~\eqref{eq:heights}, followed by summation
over the fixed number $N$ of poles, prove~\eqref{eq:densityestimate} and
complete the proof of Theorem~\ref{thm:quantitative}.
Choosing any index $m$ in the Moser representation and setting $c=c_m$ now
proves Theorem~\ref{thm:intro}.

\section{Large-time distribution of pole height}\label{sec:large-time}

In this section we investigate the asymptotic behavior of the poles of our
multisoliton approximation.  The labels are tied to the distinct velocities
$c_k-c_m$, not to the order of the particles on the real line.  The fixed-$\eps$ pole-height
expansions for $k\ne m$ below are those of
\cite[Lemma~6.2]{GerardLenzmann}.  For completeness, we rederive the $k\ne m$
height correction in our normalization and recover the distinguished branch from
the exact trace identity $-\sum_{k=1}^N\Im z_k^\eps(t)=\eps$.

Set
\begin{equation}\label{eq:LambdaB0}
  \Lambda:=\diag(c_1-c_m,\ldots,c_N-c_m),
  \qquad
  K(t):=H(t)-c_mtI=t\Lambda+B_0,
\end{equation}
where
\begin{equation}\label{eq:B0}
  (B_0)_{jj}=\gamma_j,
  \qquad
  (B_0)_{jk}=\frac{2\ii}{c_j-c_k}\quad(j\ne k).
\end{equation}
For $t>0$, put $s=t^{-1}$.  The diagonal entries of $\Lambda$ are distinct, so
ordinary analytic perturbation theory for a simple eigenvalue, applied to
$\Lambda+sB_0$ at $s=0$ \cite[Chapter~II]{Kato}, gives analytic eigenvalue and
spectral-projection branches.  Thus, for all sufficiently large positive
$t$, there is a unique real component $y_k^+(t)$ satisfying
\begin{equation}\label{eq:realbranch}
  y_k^+(t)=(c_k-c_m)t+\gamma_k+O(t^{-1}).
\end{equation}
This is the ``real eigenvalue branch'' used below.  We use only the standard
finite-dimensional expansion of a simple eigenvalue of $\Lambda+sB_0$; no
perturbation theory for the differential equation is involved.  For
sufficiently large $|t|$, the Rouch\'e argument from
Lemma~\ref{lem:localization} also isolates these branches, so no separate
branch ambiguity arises.

Let $F_k^+(t)$ be the spectral projection associated with $y_k^+(t)$ and
set $a_k^+(t)=\Tr(P_mF_k^+(t))$.  The first-order eigenvector expansion gives
\begin{align}
  a_m^+(t)
  &=1-4\sum_{k\ne m}\frac{1}{(c_k-c_m)^4t^2}
    +O(t^{-3}),\label{eq:weightm}\\
  a_k^+(t)
  &=\frac{4}{(c_k-c_m)^4t^2}+O(t^{-3}),
  \qquad k\ne m.\label{eq:weightk}
\end{align}
Indeed, a unit eigenvector $u_k^+(t)$ with eigenvalue $y_k^+(t)$ satisfies,
for $k\ne m$,
\[
  \ip{u_k^+(t)}{e_m}
  =\frac{1}{t}\,
  \frac{(B_0)_{mk}}{c_k-c_m}
  +O(t^{-2})
  =
  -\frac{2\ii}{(c_k-c_m)^2t}+O(t^{-2}),
\]
which proves~\eqref{eq:weightk}.  The identity
$\sum_{k=1}^Na_k^+(t)=1$ then gives~\eqref{eq:weightm}.

\begin{proposition}[Fixed-$\eps$ pole-height asymptotics]
\label{prop:fixedheight}
Fix $\eps>0$.  For all sufficiently large positive $t$, label the eigenvalue
$z_k^{\eps,+}(t)$ of $\mathcal M_\eps(t)$ by
\begin{equation}\label{eq:complexbranch}
  z_k^{\eps,+}(t)
  =(c_k-c_m)t+\gamma_k-\ii\eps\delta_{km}
  +O_\eps(t^{-1}).
\end{equation}
Then
\begin{align}
  -\Im z_m^{\eps,+}(t)
  &=
  \eps
  -4\eps\sum_{k\ne m}
    \frac{1}{(c_k-c_m)^4t^2}
  +O_\eps(t^{-3}),\label{eq:heightm}\\
  -\Im z_k^{\eps,+}(t)
  &=
  \frac{4\eps}{(c_k-c_m)^4t^2}
  +O_\eps(t^{-3}),
  \qquad k\ne m.\label{eq:heightk}
\end{align}
The same formulas hold as $t\to-\infty$ for the branches labelled by
\[
  z_k^{\eps,-}(t)
  =(c_k-c_m)t+\gamma_k-\ii\eps\delta_{km}
  +O_\eps(|t|^{-1}).
\]
\end{proposition}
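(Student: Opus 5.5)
The plan is to reuse the scalar displacement equation of Lemma~\ref{lem:scalar}. Now $\eps$ is fixed and $t\to+\infty$, so the roles of the small parameters are swapped: the gaps $|y_j^+-y_k^+|\sim|c_j-c_k|t$ grow, and the weights $a_k^+(t)$ for $k\ne m$ decay like $t^{-2}$ by~\eqref{eq:weightk}. For $t$ large relative to $\eps$, the minimal gap of $K(t)=t\Lambda+B_0$ exceeds $3\eps$. Lemma~\ref{lem:localization} then applies with $P=P_m$, using the weights $a_k^+(t)>0$ guaranteed by Lemma~\ref{lem:nonzero}. It gives exactly one eigenvalue in each disk $\overline{D(y_k^+(t),\eps)}$, and this is the labelling $z_k^{\eps,+}(t)$. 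Combined with~\eqref{eq:realbranch} it yields~\eqref{eq:complexbranch} up to the $-\ii\eps\delta_{km}$ term, which will come out of the next step.

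\textbf{Branches $k\ne m$.} Write $z_k^{\eps,+}=y_k^++\zeta_k$. The identity~\eqref{eq:scalarequation} reads
\[
  \zeta_k\bigl(1+\ii\eps S_k(\zeta_k)\bigr)=-\ii\eps a_k^+,
  \qquad
  S_k(\zeta)=\sum_{l\ne k}\frac{a_l^+}{y_k^++\zeta-y_l^+}.
\]
Since $|\zeta_k|\le\eps$ and every gap is $\gtrsim t$, we get $S_k=O(t^{-1})$. Hence $\zeta_k=-\ii\eps a_k^+(1+O_\eps(t^{-1}))$. Inserting~\eqref{eq:weightk} gives $\zeta_k=-\ii\eps\,4(c_k-c_m)^{-4}t^{-2}+O_\eps(t^{-3})$. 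Taking imaginary parts proves~\eqref{eq:heightk}, and also shows $\zeta_k=O(t^{-2})$, consistent with~\eqref{eq:complexbranch}.

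\textbf{Branch $m$.} This uses the trace identity from the proof of Theorem~\ref{thm:quantitative}:
\[
  \sum_{k}(-\Im z_k^{\eps,+})=-\Im\Tr\mathcal M_\eps=\eps.
\]
Subtracting the $k\ne m$ heights gives~\eqref{eq:heightm}. For the real part of $z_m^{\eps,+}$, the same scalar equation with $a_m^+=1+O(t^{-2})$ and $S_m=O(t^{-1})$ gives $\zeta_m=-\ii\eps+O_\eps(t^{-1})$. This completes~\eqref{eq:complexbranch}.

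\textbf{Negative times.} For $t\to-\infty$, put $s=|t|^{-1}$ and expand $-\Lambda+sB_0$. The eigenvector coefficient $(B_0)_{mk}/((c_k-c_m)t)$ has the same modulus squared, so~\eqref{eq:weightk} and~\eqref{eq:weightm} hold with $t^2$ unchanged. The argument above then repeats verbatim.

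The main obstacle is checking that the localization lemma applies uniformly for large $t$ at fixed $\eps$. This needs two things: a gap lower bound of order $t$ for $K(t)$, which follows from~\eqref{eq:realbranch} with distinct $c_k$, and an $O(t^{-3})$ remainder in~\eqref{eq:weightk}. Both come from analytic perturbation theory of $\Lambda+sB_0$, which is already set up above. The error terms must also be tracked so that the $O(t^{-1})$ correction in $S_k$ multiplies an $O(t^{-2})$ weight and therefore contributes only $O(t^{-3})$.
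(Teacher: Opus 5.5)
Your proof is correct. For the branches $k\ne m$ it takes a genuinely different route from the paper. The distinguished branch is handled the same way in both, through the trace identity $-\Im\Tr\mathcal M_\eps(t)=\eps$, and so are negative times.

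\textbf{The paper's route.} Its proof does not use the weights $a_k^+$ at all. It writes $\mathcal M_\eps(t)=t(\Lambda+sB_\eps)$ with the non-Hermitian $B_\eps=B_0-\ii\eps P_m$ and expands the analytic eigenvalue $\mu_k(s)$ to third order. It then checks that the first- and second-order coefficients are real for $k\ne m$. Finally it isolates the imaginary part $-4\eps/(c_k-c_m)^4$ of the third-order coefficient~\eqref{eq:thirdorder}, after cancelling the $j\ne\ell$ terms in pairs.

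\textbf{Your route.} You rerun the Section~3 machinery with $\eps/t$ playing the role of the small parameter instead of $\eps/d_*$. Lemma~\ref{lem:localization}, applied with a gap of order $t$, together with the exact secular equation~\eqref{eq:scalarequation}, gives $-\Im z_k^{\eps,+}=\eps a_k^+(t)\bigl(1+O_\eps(t^{-1})\bigr)$. What remains is only the first-order eigenvector asymptotics~\eqref{eq:weightk} of the Hermitian matrix $K(t)$.

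\textbf{What each buys.} Your argument is shorter: it needs no third-order non-Hermitian perturbation formula and no cancellation bookkeeping. It gives an explicit threshold on $t$, namely that the gap of $K(t)$ exceed $3\eps$. It also explains structurally why the heights reproduce~\eqref{eq:weightm}--\eqref{eq:weightk}, which the paper records only as a consistency check. The paper's route instead exhibits each pole as $t\mu_k(1/t)$ with $\mu_k$ analytic at $s=0$. That yields a convergent expansion in $t^{-1}$ of the full complex pole, real part included, to any order. It works directly with $\mathcal M_\eps$ rather than passing through the spectral data of $H(t)$.
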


\begin{proof}
Write
\begin{equation}\label{eq:Beps}
  \mathcal M_\eps(t)
  =t(\Lambda+sB_\eps),
  \qquad
  s=t^{-1},
  \qquad
  B_\eps=B_0-\ii\eps P_m.
\end{equation}
Let $\mu_k(s)$ be the analytic eigenvalue of $\Lambda+sB_\eps$ with
$\mu_k(0)=c_k-c_m$.  Its expansion is
\[
  \mu_k(s)
  =(c_k-c_m)
  +s(B_\eps)_{kk}
  +s^2\mu_{k,2}
  +s^3\mu_{k,3}
  +O_\eps(s^4).
\]
The second-order coefficient
\[
  \mu_{k,2}
  =\sum_{j\ne k}
  \frac{(B_\eps)_{kj}(B_\eps)_{jk}}{c_k-c_j}
\]
is real.  The third-order coefficient is
\begin{equation}\label{eq:thirdorder}
  \mu_{k,3}
  =
  \sum_{j,\ell\ne k}
  \frac{
    (B_\eps)_{kj}(B_\eps)_{j\ell}(B_\eps)_{\ell k}
  }{(c_k-c_j)(c_k-c_\ell)}
  -(B_\eps)_{kk}
  \sum_{j\ne k}
  \frac{(B_\eps)_{kj}(B_\eps)_{jk}}{(c_k-c_j)^2}.
\end{equation}

In the first sum in~\eqref{eq:thirdorder}, the imaginary parts of the terms
with $j\ne\ell$ cancel in pairs under $(j,\ell)\leftrightarrow(\ell,j)$.  When $j=\ell$, the term
is real unless $j=m$.  If $k\ne m$, the only imaginary contribution is
\[
  \Im
  \frac{
    (B_\eps)_{km}(B_\eps)_{mm}(B_\eps)_{mk}
  }{(c_k-c_m)^2}
  =
  -\frac{4\eps}{(c_k-c_m)^4}.
\]
For $k\ne m$, the normalization term in~\eqref{eq:thirdorder} is real
because $(B_\eps)_{kk}=\gamma_k$.

Thus, for each $k\ne m$, multiplication of the expansion for
$\mu_k(t^{-1})$ by $t$ gives~\eqref{eq:heightk}.
Since the branches $z_k^{\eps,+}(t)$, $k=1,\ldots,N$, exhaust the spectrum of
$\mathcal M_\eps(t)$ and
$-\Im\Tr\mathcal M_\eps(t)=\eps\Tr(P_m)=\eps$,
equation~\eqref{eq:heightm} follows:
\[
  -\Im z_m^{\eps,+}(t)
  =\eps-\sum_{k\ne m}\bigl(-\Im z_k^{\eps,+}(t)\bigr)
  =\eps-4\eps\sum_{k\ne m}\frac{1}{(c_k-c_m)^4t^2}
  +O_\eps(t^{-3}).
\]
For negative $t$, the same perturbative calculation applies with $s=t^{-1}<0$,
and the same exact trace identity gives the distinguished branch.  This proves
the stated formulas as $t\to-\infty$ as well.
\end{proof}

As a consistency check, the leading coefficients in
Proposition~\ref{prop:fixedheight} agree with
\eqref{eq:weightm}--\eqref{eq:weightk}.

\section{The two-particle example}\label{sec:example}

In this section we illustrate our approximation by providing an explicit
example in the case $N=2$.

Let $N=2$, write
\[
  c_1<c_2,
  \qquad
  g=c_2-c_1>0,
\]
and choose $m=1$.  We subtract $c_1t$ from both particle positions.  Define
\begin{equation}\label{eq:twoaux}
  T(t)=\gamma_1+\gamma_2+gt,
  \qquad
  \Delta(t)=\gamma_1-\gamma_2-gt,
  \qquad
  R(t)=\sqrt{\Delta(t)^2+\frac{16}{g^2}}.
\end{equation}
The two real components in this frame are
\begin{equation}\label{eq:tworeal}
  y_\pm(t)=\frac{T(t)\pm R(t)}{2},
  \qquad
  y_-(t)<y_+(t).
\end{equation}
The functions $y_\pm$ solve~\eqref{eq:realCM}, but their ordered labels are not
tied to fixed asymptotic velocity branches.  Indeed,
\begin{align*}
  y_+(t)&=gt+\gamma_2+O(t^{-1}),
  &y_-(t)&=\gamma_1+O(t^{-1})
  &&(t\to+\infty),\\
  y_+(t)&=\gamma_1+O(|t|^{-1}),
  &y_-(t)&=gt+\gamma_2+O(|t|^{-1})
  &&(t\to-\infty).
\end{align*}
This exchange of ordered labels clarifies the roles of $\gamma_1$ and
$\gamma_2$.

The corresponding approximating multisoliton matrix is
\begin{equation}\label{eq:twomatrix}
  \mathcal M_\eps(t)
  =
  \begin{pmatrix}
    \gamma_1-\ii\eps & -2\ii/g\\
    2\ii/g & \gamma_2+gt
  \end{pmatrix}.
\end{equation}
Consequently, the inverse formula~\eqref{eq:GLinverse} becomes the explicit
two-soliton profile
\begin{equation}\label{eq:twoprofile}
  q^\eps(t,x)
  =
  \sqrt{2\eps}\,\e^{\ii\phi}
  \frac{\gamma_2+gt-x+2\ii/g}
  {(\gamma_1-\ii\eps-x)(\gamma_2+gt-x)-4/g^2}.
\end{equation}
Its poles are
\begin{equation}\label{eq:twopoles}
  z_\pm^\eps(t)
  =
  \frac{T(t)-\ii\eps\pm R_\eps(t)}{2},
  \qquad
  R_\eps(t)
  :=
  \sqrt{(\Delta(t)-\ii\eps)^2+\frac{16}{g^2}},
\end{equation}
where the square-root branch is continued from $R_0(t)=R(t)>0$.

The spectral weights are
\begin{equation}\label{eq:twoweights}
  a_+(t)
  =\frac12\left(1+\frac{\Delta(t)}{R(t)}\right),
  \qquad
  a_-(t)
  =\frac12\left(1-\frac{\Delta(t)}{R(t)}\right).
\end{equation}
\begin{figure}[t]
\centering
\begin{tikzpicture}
  \begin{axis}[
    width=0.96\textwidth,
    height=4.4cm,
    domain=-3:3,
    samples=241,
    xlabel={$t$},
    ylabel={position},
    xmin=-3,
    xmax=3,
    grid=major,
    grid style={gray!18},
    tick label style={font=\scriptsize},
    label style={font=\small},
    legend style={
      font=\scriptsize,
      draw=none,
      fill=none,
      at={(0.5,1.02)},
      anchor=south,
      legend columns=2
    }
  ]
    \addplot[black,thick] {x+sqrt(x^2+1)};
    \addlegendentry{particles $y_\pm$}
    \addplot[black,thick,forget plot] {x-sqrt(x^2+1)};
    \addplot[blue!65!black,dashed,thick] {x+0.5*twou(x,0.8)};
    \addlegendentry{pole real parts $\Re z_\pm^{0.8}$}
    \addplot[blue!65!black,dashed,thick,forget plot]
      {x-0.5*twou(x,0.8)};
  \end{axis}
\end{tikzpicture}

\vspace{0.4em}

\begin{minipage}{0.49\textwidth}
\centering
\begin{tikzpicture}
  \begin{axis}[
    width=0.95\textwidth,
    height=4.2cm,
    domain=-3:3,
    samples=241,
    xlabel={$t$},
    ylabel={$\max_\pm|\Re z_\pm^\eps-y_\pm|/\eps^2$},
    xmin=-3,
    xmax=3,
    ymin=0,
    grid=major,
    grid style={gray!18},
    tick label style={font=\scriptsize},
    label style={font=\scriptsize},
    legend style={
      font=\tiny,
      draw=none,
      fill=none,
      at={(0.5,-0.30)},
      anchor=north,
      legend columns=2,
      /tikz/every even column/.append style={column sep=0.5em}
    },
    legend cell align=left
  ]
    \addplot[black,densely dashed,thick]
      {1/(8*(x^2+1)^(3/2))};
    \addlegendentry{leading profile}
    \addplot[blue!65!black,thick]
      {abs(twou(x,0.2)-2*sqrt(x^2+1))/(2*0.2^2)};
    \addlegendentry{$\eps=0.2$}
    \addplot[orange!85!black,thick]
      {abs(twou(x,0.4)-2*sqrt(x^2+1))/(2*0.4^2)};
    \addlegendentry{$\eps=0.4$}
    \addplot[green!45!black,thick]
      {abs(twou(x,0.8)-2*sqrt(x^2+1))/(2*0.8^2)};
    \addlegendentry{$\eps=0.8$}
  \end{axis}
\end{tikzpicture}
\end{minipage}
\hfill
\begin{minipage}{0.49\textwidth}
\centering
\begin{tikzpicture}
  \begin{axis}[
    width=0.95\textwidth,
    height=4.2cm,
    domain=-3:3,
    samples=241,
    xlabel={$t$},
    ylabel={pole height},
    xmin=-3,
    xmax=3,
    ymin=0,
    ymax=0.85,
    grid=major,
    grid style={gray!18},
    tick label style={font=\scriptsize},
    label style={font=\scriptsize},
    legend style={
      font=\tiny,
      draw=none,
      fill=none,
      at={(0.5,-0.30)},
      anchor=north,
      legend columns=2,
      /tikz/every even column/.append style={column sep=0.5em}
    },
    legend cell align=left
  ]
    \addplot[blue!65!black,thick] {(0.8-twov(x,0.8))/2};
    \addlegendentry{$-\Im z_+^{0.8}$}
    \addplot[orange!85!black,thick] {(0.8+twov(x,0.8))/2};
    \addlegendentry{$-\Im z_-^{0.8}$}
    \addplot[black,densely dashed] {0.8};
    \addlegendentry{total height $\eps$}
  \end{axis}
\end{tikzpicture}
\end{minipage}
\caption{Numerical illustration of the exact two-particle formulas with
$g=2$ and $\gamma_1=\gamma_2=0$.  Top: the pole real parts track the real
particles through the interaction.  Bottom left: after division by
$\eps^2$, the real-position errors approach the leading profile
$[8(1+t^2)^{3/2}]^{-1}$.  Bottom right: the two pole heights exchange while
their sum remains $\eps$.}
\label{fig:two-particle}
\end{figure}

Expanding~\eqref{eq:twopoles} gives
\begin{align}
  z_+^\eps(t)
  &=
  y_+(t)-\ii\eps a_+(t)
  -\eps^2\frac{a_+(t)a_-(t)}{R(t)}
  +O(\eps^3),\label{eq:twoplusexp}\\
  z_-^\eps(t)
  &=
  y_-(t)-\ii\eps a_-(t)
  +\eps^2\frac{a_+(t)a_-(t)}{R(t)}
  +O(\eps^3),\label{eq:twominusexp}
\end{align}
uniformly for $t\in\R$.  These formulas show directly that the displacements
$z_\pm^\eps-y_\pm$ are not purely imaginary: their real parts are generally
nonzero at order $\eps^2$.

Since
\[
  \frac{a_+(t)a_-(t)}{R(t)}
  =\frac{4}{g^2R(t)^3},
\]
the leading real error is largest when $\Delta(t)=0$, that is, at the time of
closest interaction.  For each real $\Delta$, the branch points of
$R_\eps$ as a function of complex $\eps$ have modulus
\[
  \sqrt{\Delta^2+\frac{16}{g^2}}\ge\frac4g.
\]
Hence the real part of~\eqref{eq:twopoles} has a uniform even expansion in
$\eps$ for $|\eps|<4/g$.  The leading coefficient has its unique
nondegenerate maximum at $\Delta=0$, so perturbing this maximum changes its
value only at the next even order.  Since $R=4/g$ at $\Delta=0$, one obtains
\begin{equation}\label{eq:twomax}
  \sup_{t\in\R}
  |\Re z_\pm^\eps(t)-y_\pm(t)|
  =
  \frac{g}{16}\eps^2+O(\eps^4).
\end{equation}
Thus the $O(\eps^2)$ real-position estimate in
Theorem~\ref{thm:quantitative} is sharp.

\FloatBarrier
\section*{Statements and Declarations}

\ifx\FundingStatement\empty\else
\noindent\textbf{Funding.} \FundingStatement\par\medskip
\fi
\ifx\CompetingInterestsStatement\empty\else
\noindent\textbf{Competing interests.} \CompetingInterestsStatement\par\medskip
\fi
\noindent\textbf{Data availability.} All mathematical results are proved in
the article. The numerical illustration in Figure~\ref{fig:two-particle} is
obtained directly from the explicit formulas in Section~\ref{sec:example};
the plotting code is included in the LaTeX source.

\medskip
\noindent\textbf{Use of generative AI.} Generative AI tools assisted with
organization and language editing. The author verified all mathematical
arguments and references and is solely responsible for the manuscript.


\begingroup
\footnotesize
\begin{thebibliography}{99}
\setlength{\itemsep}{0pt}
\setlength{\parsep}{0pt}

\bibitem{AbanovBettelheimWiegmann}
A.~G. Abanov, E.~Bettelheim, and P.~Wiegmann,
\emph{Integrable hydrodynamics of Calogero--Sutherland model:
bidirectional Benjamin--Ono equation},
J. Phys. A \textbf{42} (2009), no.~13, 135201, 24 pp.
\url{https://doi.org/10.1088/1751-8113/42/13/135201}.

\bibitem{BadreddineGlobal}
R.~Badreddine,
\emph{On the global well-posedness of the Calogero--Sutherland derivative
nonlinear Schr\"odinger equation},
Pure Appl. Anal. \textbf{6} (2024), no.~2, 379--414.
\url{https://doi.org/10.2140/paa.2024.6.379}.

\bibitem{BadreddineZeroDisp}
R.~Badreddine,
\emph{Zero dispersion limit of the Calogero--Moser derivative NLS equation},
SIAM J. Math. Anal. \textbf{56} (2024), no.~6, 7228--7249.
\url{https://doi.org/10.1137/24M1646935}.

\bibitem{CalogeroGround}
F.~Calogero,
\emph{Ground state of a one-dimensional $n$-body system},
J. Math. Phys. \textbf{10} (1969), no.~12, 2197--2200.
\url{https://doi.org/10.1063/1.1664821}.

\bibitem{Calogero}
F.~Calogero,
\emph{Solution of the one-dimensional $N$-body problems with quadratic
and/or inversely quadratic pair potentials},
J. Math. Phys. \textbf{12} (1971), 419--436.
\url{https://doi.org/10.1063/1.1665604}.

\bibitem{ChapoutoForlanoLaurens}
A.~Chapouto, J.~Forlano, and T.~Laurens,
\emph{On the well-posedness of the intermediate nonlinear Schr\"odinger
equation on the line},
preprint, \href{https://arxiv.org/abs/2511.00302}{arXiv:2511.00302}, 2025.

\bibitem{ChapoutoForlanoLaurensSubcritical}
A.~Chapouto, J.~Forlano, and T.~Laurens,
\emph{Sub-critical well-posedness for the intermediate nonlinear Schr\"odinger
equation on the line},
preprint, \href{https://arxiv.org/abs/2608.06298}{arXiv:2608.06298}, 2026.

\bibitem{FrankRead}
R.~L. Frank and L.~Read,
\emph{Jost solutions and direct scattering for the continuum
Calogero--Moser equation},
preprint, \href{https://arxiv.org/abs/2510.11403}{arXiv:2510.11403}, 2025.

\bibitem{GerardLenzmann}
P.~G\'erard and E.~Lenzmann,
\emph{The Calogero--Moser derivative nonlinear Schr\"odinger equation},
Comm. Pure Appl. Math. \textbf{77} (2024), 4008--4062.
\url{https://doi.org/10.1002/cpa.22203}.

\bibitem{Hadama}
S.~Hadama,
\emph{Small-data $L^2$ theory for the intermediate NLS and the
Calogero--Moser derivative NLS},
preprint, \href{https://arxiv.org/abs/2608.01138}{arXiv:2608.01138}, 2026.

\bibitem{HoganKowalski}
J.~Hogan and M.~Kowalski,
\emph{Turbulent threshold for continuum Calogero--Moser models},
Pure Appl. Anal. \textbf{6} (2024), 941--954.
\url{https://doi.org/10.2140/paa.2024.6.941}.

\bibitem{Kato}
T.~Kato,
\emph{Perturbation Theory for Linear Operators},
Classics in Mathematics, Springer, Berlin, 1995.
\url{https://doi.org/10.1007/978-3-642-66282-9}.

\bibitem{KillipLaurensVisan}
R.~Killip, T.~Laurens, and M.~Vi\c{s}an,
\emph{Scaling-critical well-posedness for continuum Calogero--Moser models
on the line},
Commun. Amer. Math. Soc. \textbf{5} (2025), 284--320.
\url{https://doi.org/10.1090/cams/48}.

\bibitem{KillipMarsdenVisan}
R.~Killip, K.~Marsden, and M.~Vi\c{s}an,
\emph{The Hamiltonian formulation of continuum Calogero--Moser models},
preprint, \href{https://arxiv.org/abs/2604.09479}{arXiv:2604.09479}, 2026.

\bibitem{KimKwon}
T.~Kim and S.~Kwon,
\emph{Soliton resolution for Calogero--Moser derivative nonlinear
Schr\"odinger equation},
to appear in J. Eur. Math. Soc., \href{https://arxiv.org/abs/2408.12843}{arXiv:2408.12843}, 2024.

\bibitem{Matsuno}
Y.~Matsuno,
\emph{Multiphase solutions and their reductions for a nonlocal nonlinear
Schr\"odinger equation with focusing nonlinearity},
Stud. Appl. Math. \textbf{151} (2023), 883--922.
\url{https://doi.org/10.1111/sapm.12610}.

\bibitem{Moser}
J.~Moser,
\emph{Three integrable Hamiltonian systems connected with isospectral
deformations},
Adv. Math. \textbf{16} (1975), 197--220.
\url{https://doi.org/10.1016/0001-8708(75)90151-6}.

\bibitem{Pelinovsky}
D.~E. Pelinovsky,
\emph{Intermediate nonlinear Schr\"odinger equation for internal waves in a
fluid of finite depth},
Phys. Lett. A \textbf{197} (1995), nos.~5--6, 401--406.
\url{https://doi.org/10.1016/0375-9601(94)00991-W}.

\bibitem{Ruijsenaars}
S.~N.~M. Ruijsenaars,
\emph{Action-angle maps and scattering theory for some finite-dimensional
integrable systems. I. The pure soliton case},
Comm. Math. Phys. \textbf{115} (1988), 127--165.
\url{https://doi.org/10.1007/BF01238855}.

\bibitem{Sutherland71}
B.~Sutherland,
\emph{Exact results for a quantum many-body problem in one dimension},
Phys. Rev. A \textbf{4} (1971), 2019--2021.
\url{https://doi.org/10.1103/PhysRevA.4.2019}.

\bibitem{Sutherland72}
B.~Sutherland,
\emph{Exact results for a quantum many-body problem in one dimension. II},
Phys. Rev. A \textbf{5} (1972), 1372--1376.
\url{https://doi.org/10.1103/PhysRevA.5.1372}.

\bibitem{Sutherland75}
B.~Sutherland,
\emph{Exact ground-state wave function for a one-dimensional plasma},
Phys. Rev. Lett. \textbf{34} (1975), 1083--1085.
\url{https://doi.org/10.1103/PhysRevLett.34.1083}.

\bibitem{SunWangZhao}
R.~Sun, D.-S.~Wang, and Y.~Zhao,
\emph{On the direct scattering theory for the Calogero--Moser derivative
nonlinear Schr\"odinger equation},
preprint, \href{https://arxiv.org/abs/2607.11123}{arXiv:2607.11123}, 2026.

\end{thebibliography}
\endgroup
\end{document}